\newcommand{\qo}{\mathcal{Q}}
\newcommand{\po}{\mathcal{P}}
\begin{document}

\title{A finite chiral 4-polytope in $\mathbb{R}^4$\thanks{Partially supported by PAPIIT - UNAM under the project IN112511 and by CONACyT under project 166951}}

\author{Javier Bracho \and Isabel Hubard \and Daniel Pellicer}
\institute{ J. Bracho \and I. Hubard \at Instituto de Matem\'{a}ticas, Universidad Nacional Aut\'{o}noma de M\'{e}xico, M\'{e}xico \ \ \email{jbracho, isahubard@im.unam.mx}
\and D. Pellicer \at
Centro de Ciencias Matem\'{a}ticas, Universidad Nacional Aut\'{o}noma de M\'{e}xico, M\'{e}xico \ \ \email{pellicer@matmor.unam.mx}}

\date{\today}

\maketitle

\begin{abstract}
In this paper, we give an example of a chiral 4-polytope in projective 3-space.
This example naturally yields a finite chiral 4-polytope in Euclidean 4-space, giving a counterexample to Theorem 11.2 of \cite{peter}.
\end{abstract}


\section{Introduction}

Abstract polytopes are combinatorial structures that resemble convex polytopes.
Of particular interest are those with high degree of symmetry, together with their realisations in euclidean spaces.
Regular polytopes have maximum degree of symmetry, with their automorphism group
being as bis as possible.
Chiral polytopes have maximum possible rotational symmetry, but no reflections.  (See \cite{arp} for formal definitions of these concepts.)

A finite (abstract) $n$-polytope is said to be of full rank if it can be realised in euclidean $n$-space.
In 1977 Gr\"unbaum (\cite{branko}) gave the list of the finite regular 18 polyhedra of full rank and a few years later Dress (\cite{dress1,dress2}) showed that the list was complete.
Regular polytopes of full rank have been studied by McMullen in \cite{peter}.
Theorem 11.2 of the same paper claims that there are no chiral $n$-polytopes of full rank.
In \cite{egon1,egon2} Schulte independently proved this result for $n=3$.
Here paper we give a counterexample to the theorem, for $n=4$.

Our approach follows \cite{proj1,proj2}, where a vertex-transitive realisation of a finite polytope in $\mathbb{R}^{n+1}$ naturally corresponds to a projective polytope in $\mathbb{P}^n$.
Hence, to give a chiral $4$-polytope of full rank we construct a chiral 4-polytope in the projective space $\mathbb{P}^3$.

\section{A chiral 4-polytope in $\mathbb{P}^3$}

In this section we construct a chiral 4-polytope of Schl\"afli type $\{4,3,3\}$ \cite[pp. 29, 30]{arp}. To this end, we start by considering the complete bipartite graph $K_{4,4}$. As it is shown in Figure~\ref{K44}, we can colour the edges of $K_{4,4}$ with 4 colours in such a way that two edges of the same colour are not incident, so that
each colour induces a perfect matching in the graph.
We label the vertices of the graph $v_0, v_1, v_2, v_3, u_0, u_1, u_2, u_3$ as in the figure.

\begin{figure}[htbp]
\begin{center}
\includegraphics[width=7cm]{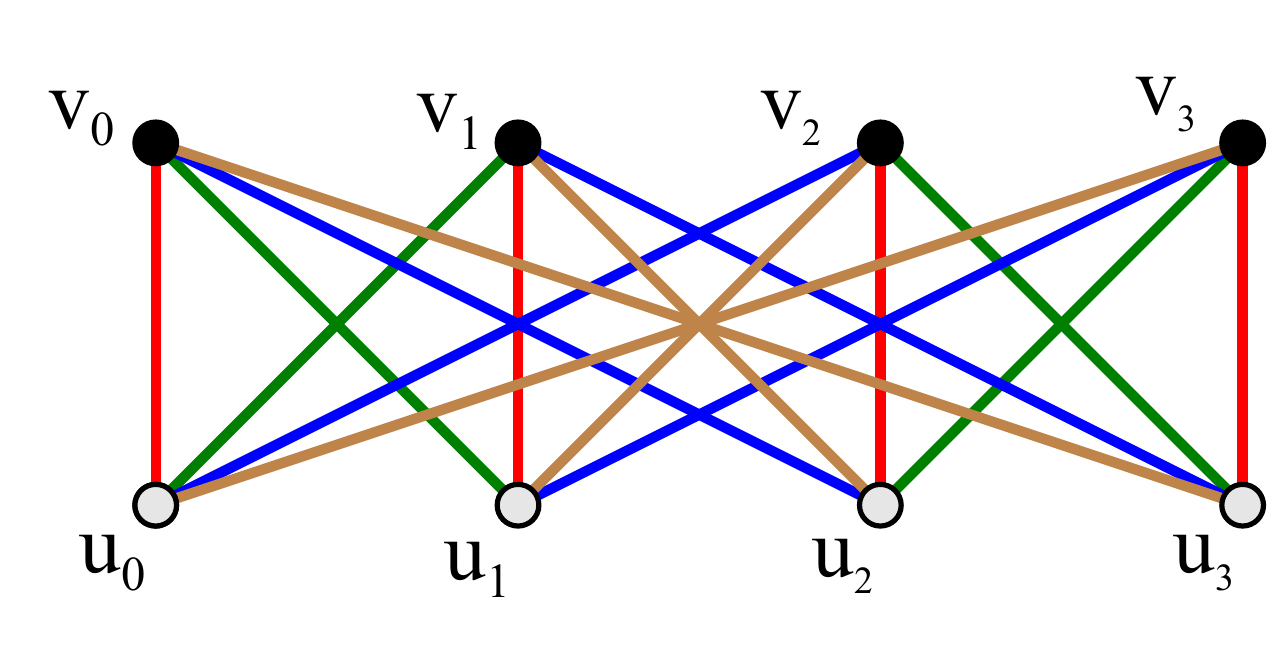}
\caption{The graph $K_{4,4}$, with the edges coloured with 4 colours.}
\label{K44}
\end{center}
\end{figure}

It is not difficult to see that with the colouring of $K_{4,4}$ given in Figure~\ref{K44}, we can obtain a colourful 4-polytope $\po$ in the sense of \cite{colour}.
In fact,  the 2-faces of $\po$ are the cycles of $K_{4,4}$ that have exactly two colours.
Hence, each of the alternating squares of two given colours is a 2-face of $\po$.
The facets of $\po$ are defined by the subgraphs coloured with exactly tree colours.
Then, we can see that $\po$ has 4 facets and each of them is a cube.
(In fact, we observe that the graph $Q_3$ of the cube is precisely $K_{4,4}$ minus a perfect matching.)
The automorphisms of $\po$ are all the colour respecting automorphisms of $K_{4,4}$, that is, all the automorphisms of $K_{4,4}$ that induce a permutation on the colours (see \cite{colour}).

\begin{figure}[htbp]
\begin{center}
\includegraphics[width=4.5cm]{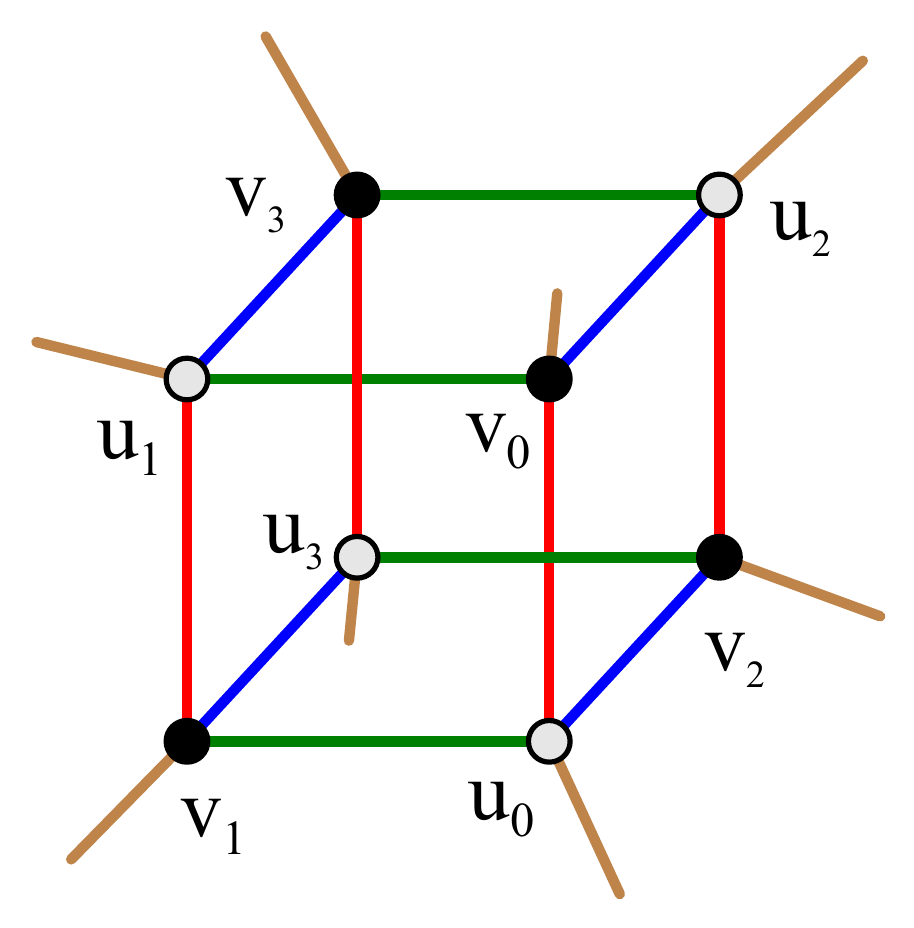}
\caption{The  {\em regular colouring} of $K_{4,4}$ in $\mathbb{P}^3$.}
\label{hemicube}
\end{center}
\end{figure}
Therefore, $\po$ is isomorphic to the hemi-hypercube $\{4,3,3\}/2$ shown in Figure~\ref{hemicube}, and hence we can think that $\po$ lives in the projective 3-space.
(Recall that the hemi-hypercube in $\mathbb{P}^3$ can be understood as the quotient of the hypercube $\{4,3,3\}$ in $\mathbb{R}^4$ by the central inversion of $\mathbb{R}^4$ that identifies antipodal points of the hypercube. In fact the vertices of  $\{4,3,3\}/2$, in homogeneous coordinates, have all entries $\pm 1$, and the edges are the geodesics between two vertices that differ in one entry. Furthermore, the four colours of our embedding of $K_{4,4}$ correspond to the four coordinates, or directions.)
%
Observe that there is an edge between opposite vertices of a given facet of $\{4,3,3\}/2$.
In fact, the edge has precisely the colour that is missing in that cube. (See Figure~\ref{hemicube}). 
It is well-know that the hemi-hypercube $\{4,3,3\}/2$ is a regular 4-polytope in the projective space.

%
It is well-know that the hemi-hypercube $\{4,3,3\}/2$ is a regular 4-polytope in the projective space.
Note that symmetries of the hemi-hypercube correspond not only to the colour respecting automorphisms of the graph, but also to the isometries of $\mathbb{P}^3$ that preserve the graph.

Now, consider the graph $K_{4,4}$ embedded in  $\mathbb{P}^3$ as fixed, and observe that it admits two \emph{chiral colourings} as in Figure~\ref{cons3}.
They are combinatorially equivalent to the colouring of Figures~\ref{K44} and~\ref{hemicube}, because all its bi-coloured cycles are squares.
(In fact, they correspond to the Petrie polygons of the hemi-hypercube).

%
%
%
\begin{figure}[htbp]
\begin{center}
\includegraphics[width=11cm]{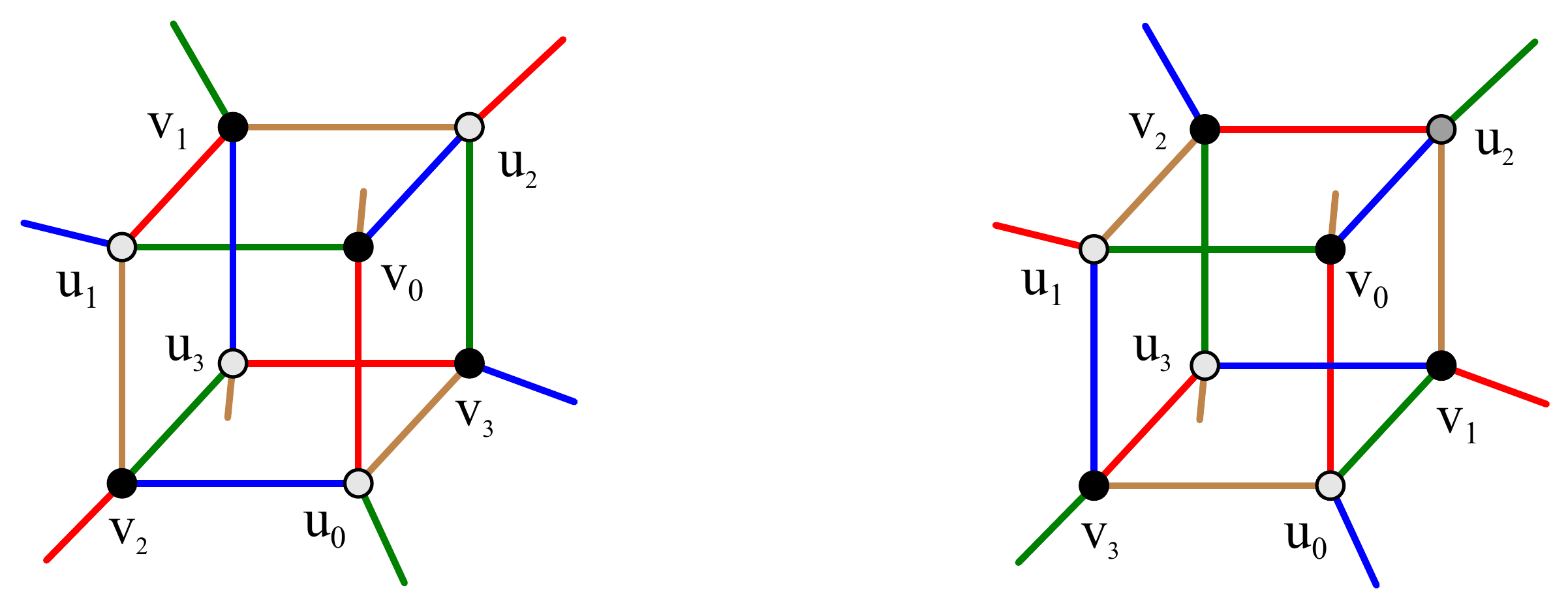}
\caption{The two chiral colourings of of $K_{4,4}$ in $\mathbb{P}^3$.}
\label{cons3}
\end{center}
\end{figure}

A simple inspection shows that the two chiral colourings have the following properties: (a) each colour has an edge in each direction (is transversal to the regular colouring), and (b) each 2-face of the regular hemi-hypercube has the four colours.
It is not hard to see that any of these properties defines the chiral colourings

We now analyse these new 4-polytopes. It should be clear that what we say about one of them can be similarly said about the other, and hence we use the one on the left of Figure~\ref{cons3}.

As before, we regard this new 4-polytope $\qo$ as a colourful polytope: the 2-faces of $\qo$ are the cycles of exactly two colours and the facets are determined by the subgraphs with exactly 3 colours.
Hence, we see that the 2-faces are again 4-gons, that we see now as helices in the projective space (see Figure~\ref{cons4}).
In fact, each of the 2-faces of $\qo$ corresponds to a Petrie polygon of $\po$ (see \cite[p. 163]{arp}).

\begin{figure}[htbp]
\begin{center}
\includegraphics[width=11cm]{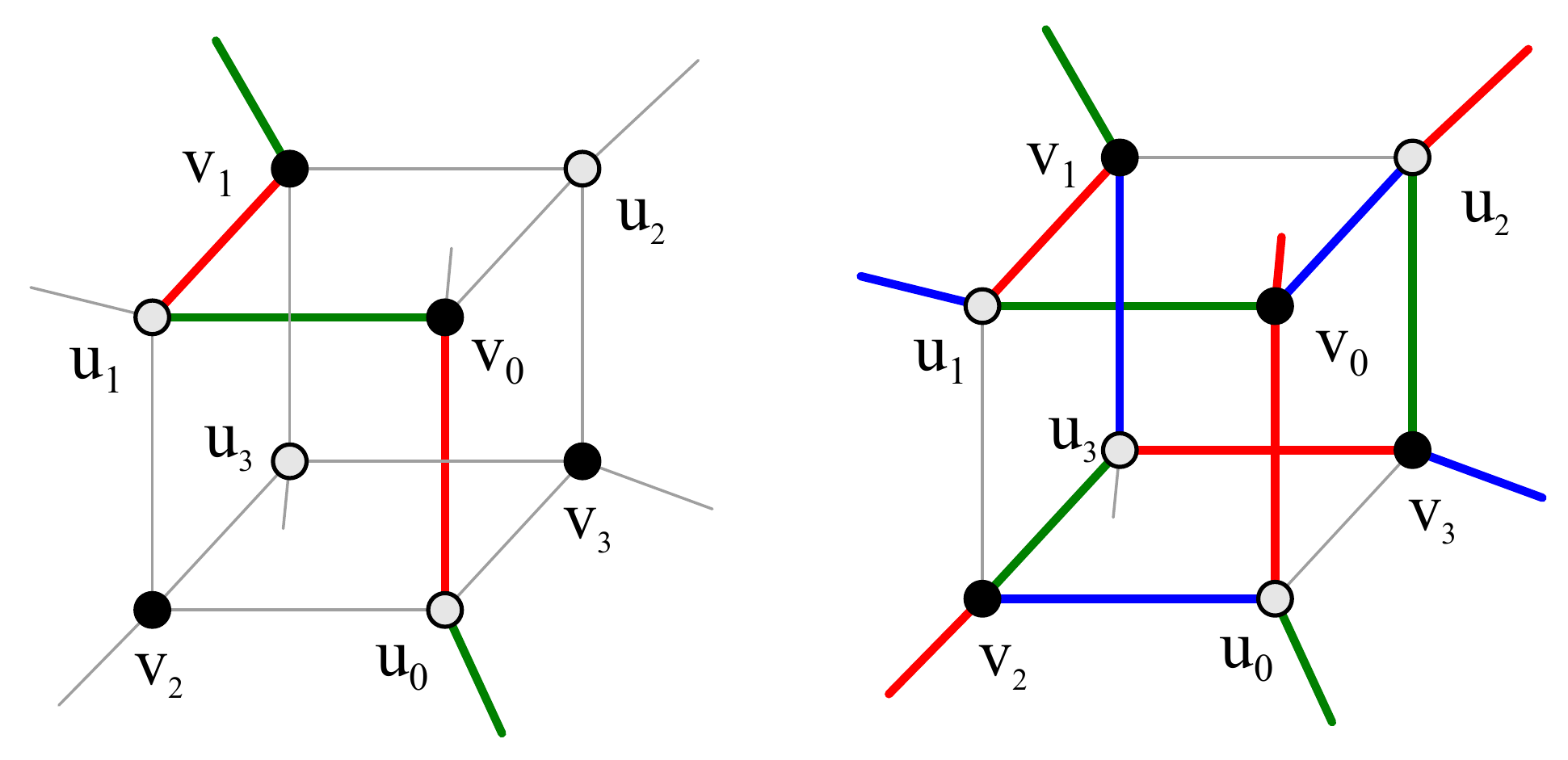}
\caption{A 2-faces and a 3-face of $\qo$.}
\label{cons4}
\end{center}
\end{figure}
%

We note that the edge colouring of $K_{4,4}$ in $\qo$ is the same as its colouring in $\po$. Hence $\po$ and $\qo$ are combinatorially isomorphic. On the other hand, the 2-faces of $\qo$ are Petrie polygons of the hemicube $\po$ and vice-versa.

\begin{theorem}
Let $\qo$ be the 4-polytope of type $\{4,3,3\}$ in the projective space constructed above. Then $\qo$ is combinatorially isomorphic to the hemi-hypercube $\{4,3,3\}/2$ and geometrically chiral, with geometrically chiral facets.
\end{theorem}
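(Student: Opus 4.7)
The plan is to establish three things in sequence: the combinatorial isomorphism $\qo \cong \{4,3,3\}/2$, the geometric chirality of $\qo$ in $\mathbb{P}^3$, and the geometric chirality of its facets. For the combinatorial isomorphism, I would invoke the colourful-polytope construction: the face lattice is completely determined by the edge-coloured graph together with the prescription that $2$-faces are the bi-coloured cycles and $k$-faces (for $k>2$) come from $k$-colour subgraphs. The excerpt already notes that every bi-coloured cycle in the chiral colouring of $\qo$ is a $4$-cycle, exactly as in $\po$; hence the $2$-skeleta of $\qo$ and $\po$ agree abstractly, and so do the full face lattices. Combined with $\po \cong \{4,3,3\}/2$, this yields the first claim.

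For geometric chirality of $\qo$, I would analyse the group $G$ of isometries of $\mathbb{P}^3$ preserving $\qo$. Since $\qo$ has the same vertex-edge skeleton as $\po$, $G$ is contained in the geometric symmetry group of $\po$, which coincides with $\Aut(\{4,3,3\}/2)$ and has order $192$. An element of this group lies in $G$ precisely when it permutes the four chiral colour classes as a set. The excerpt exhibits two distinct chiral colourings in Figure~\ref{cons3} that are mirror images of each other, and are therefore swapped by an orientation-reversing isometry of $\mathbb{P}^3$. Consequently $G$ is the index-$2$ orientation-preserving subgroup, of order $96$, and under the combinatorial isomorphism it is identified with the abstract rotation subgroup $\Aut^+(\{4,3,3\}/2)$. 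The latter has exactly two orbits on flags with adjacent flags in different orbits, which is the definition of geometric chirality.

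For the facets, each facet $F$ of $\qo$ corresponds to the omission of one of the four colours and is abstractly a cube. Its geometric symmetry group in $\mathbb{P}^3$ is the stabiliser in $G$ of the omitted colour, of order $96/4=24$, since $\Aut^+(\{4,3,3\}/2)$ acts transitively on the four facets and hence on the four colours; this matches the rotation group of the cube. To rule out further geometric symmetries, note that any isometry of $\mathbb{P}^3$ preserving $F$ preserves the vertex set of $F$, which equals the full vertex set of $\po$; hence it preserves the entire edge set of $\po$ and in particular the four edges forming the omitted colour class. If orientation-preserving, it then lies in $G$ and in the stated stabiliser; if orientation-reversing, it would reverse the handedness of the helical square $2$-faces of $F$ and hence fail to preserve $F$. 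Thus the geometric symmetry group of $F$ has exactly $24$ elements, realising only the rotation subgroup of $\Aut(\mathrm{cube})$, which gives the geometric chirality of $F$.

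The main obstacle is justifying that the two chiral colourings in Figure~\ref{cons3} are related by an orientation-reversing isometry of $\mathbb{P}^3$ and by no orientation-preserving one; this global handedness statement is what upgrades the combinatorial equivalence of the two colourings to genuine geometric chirality of $\qo$, and is most likely established by an explicit coordinate description of the embedding together with an analysis of how the orthogonal group acts on the colour classes.
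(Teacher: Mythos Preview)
Your plan has the right architecture, and the combinatorial-isomorphism step is fine. However, the paper's argument for geometric chirality is considerably more direct than yours, and your acknowledged obstacle is exactly where the two diverge. Rather than arguing globally via the pair of mirror colourings, the paper simply checks generators: the $4$-fold rotations of the central cube in Figure~\ref{cons3} and the $3$-fold rotations about edges all visibly permute the four chiral colour classes, so the full orientation-preserving half of $\mathrm{Sym}(\po)$ (order $96$) lies in $G$; and one explicit reflection (the hyperplane perpendicular to a monochromatic edge-class in the \emph{regular} colouring) visibly fails to preserve the chiral colouring, so $|G|=96$. This replaces your ``most likely established by an explicit coordinate description'' with a two-line verification.

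There is also a gap you do not flag. Even granting $G=\mathrm{Sym}^+(\po)$, your sentence ``under the combinatorial isomorphism it is identified with the abstract rotation subgroup $\Aut^+(\{4,3,3\}/2)$'' is not justified: $\Aut(\{4,3,3\}/2)$ has three index-$2$ subgroups, and the combinatorial isomorphism $\qo\to\po$ is \emph{not} the identity on the common $1$-skeleton (the $2$-faces of $\qo$ are Petrie polygons of $\po$), so there is no a~priori reason the geometric orientation class coincides with the combinatorial rotation class of $\qo$. The paper closes this by exhibiting the distinguished abstract rotations of $\qo$ concretely: the stabiliser of an incident (2-face, 3-face) pair is generated by the screw along the helix axis (e.g.\ $(v_0u_0v_1u_1)(u_2v_2u_3v_3)$), the stabiliser of an incident (vertex, 3-face) pair by a $3$-fold edge rotation, and the edge pointwise stabiliser likewise. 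These are all in $G$, so $G$ contains --- hence equals --- the combinatorial rotation subgroup of $\Aut(\qo)$, giving chirality via \cite[p.~495]{chiregas}. The same generators restricted to a facet give the chirality of the facets, which is cleaner than your stabiliser-counting argument (whose orientation-reversing branch again leans on the unproved handedness claim).
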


\begin{proof}
The 1-skeleton of $\qo$ is exactly the same as the 1-skeleton of the hemi-hypercube $\po$ and every isometry of $\mathbb{P}^3$ that preserves such graph is a symmetry of $\po$, then every symmetry of $\qo$ is a symmetry of $\po$. Recall that $\po$ has 192 symmetries.

We first note that all 4-fold rotations of the central cube in the left part of Figure \ref{cons3} preserve the coloration by cyclically permuting the four colours. It is also true that the rotation around an edge is a 3-fold rotation that preserves the colouring by cyclically permuting the three other colours. In particular, the symmetry group of $\qo$ contains the rotation subgroup of the central cube and acts transitively on the cubes of $\po$.
Consequently, all 96 orientation preserving elements of the symmetry group of $\po$ belong also to the symmetry group of $\qo$. Observe that the reflection with respect to the plane perpendicular to the four edges with a given color in the regular colouring (Figure~\ref{hemicube}) does not preserve colors in the chiral colouring and hence, it is not a symmetry of $\qo$. This implies that $\qo$ has precisely $96$ symmetries.


It only remains to verify that these $96$ elements are precisely the combinatorial rotations of the colourful polytope $\qo$. We observe that the stabilisers of a pair consisting of incident 2-face and 3-face are generated by the twist along an axis of the (helical) 2-face (note that this twist preserves the coloring). For example, the stabiliser of the 2-face in Figure~\ref{cons4}, as permutation of the vertices of $\qo$, is  $(v_0u_0v_1u_1)(u_2v_2u_3v_3)$. Furthermore, the stabiliser of a pair consisting of incident vertex and 3-face is generated by the 3-fold rotation around the edge of $\qo$ containing the vertex, but not contained on the 3-face. Finally, the edge pointwise stabiliser is generated by the 3-fold rotation around that edge. Following \cite[p. 495]{chiregas}, this implies that $\qo$ is indeed chiral.

The arguments on the previous two paragraphs also imply that the facets are geometrically chiral.
\end{proof}

We end this section by pointing out that the two enantiomorphic forms of $\qo$ are indeed the two polytopes arising from the diagrams in Figure\ref{cons3}.

\section{Finite chiral 4-polytopes in $\mathbb{R}^4$}

In the previous section we gave an example of a chiral 4-polytope $\qo$ in $\mathbb{P}^3$.
We now take the double cover $\hat{\qo}$ of $\qo$, in the sphere $\mathbb{S}^3 \subset \mathbb{R}^4$.
\begin{figure}[htbp]
\begin{center}
\includegraphics[width=5cm]{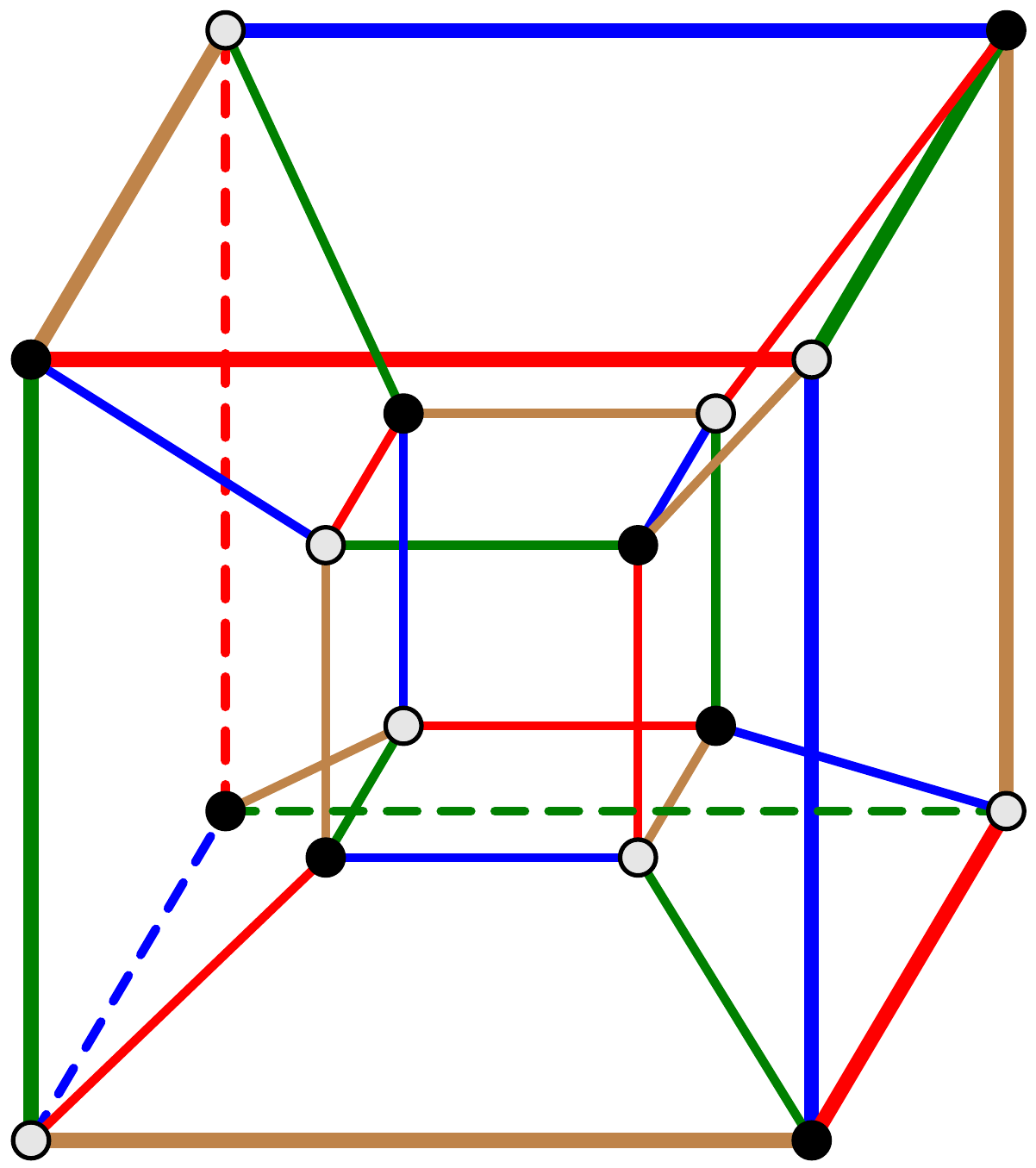}
\caption{The colourful polytope $\hat{\qo}$.}
\label{TheOne}
\end{center}
\end{figure}

Each of the vertices and edges of $\qo$ lift to two copies of them, and $K_{4,4}$ lifts into the graph $G$, the 1-skeleton of the hypercube $\{4,3,3\}$.
We label the vertices of $G$ as $\tilde{v}$ and $-\tilde{v}$, where $v$ is a vertex of $\qo$ in such a way that the sets of vertices $\{\tilde{v} \mid v \in V(\qo)\}$ and $\{-\tilde{v} \mid v \in V(\qo)\}$ are the sets of two disjoin cubes of $\{4,3,3\}$.
The 4-gons of $\qo$ lift into 8-gons of $\hat{\qo}$, implying that $\hat{\qo}$ has Schl\"afli type $\{8,3,3\}$.
Figure~\ref{2facetsinR4} shows a 2-face of $\hat{\qo}$ and its two incident facets.
Each of the 4 facets is of type $\{8,3\}$, has 16 vertices, 24 edges and 6 faces.
We note that $\hat{\qo}$ is again a colourful polytope and hence every symmetry of $\hat{\qo}$ induces a permutation of the colours of the graph $G$.
Using Figure~\ref{TheOne} it is straightforward to see that $\hat{\qo}$ is not regular.

\begin{figure}[htbp]
\begin{center}
\includegraphics[width=15cm]{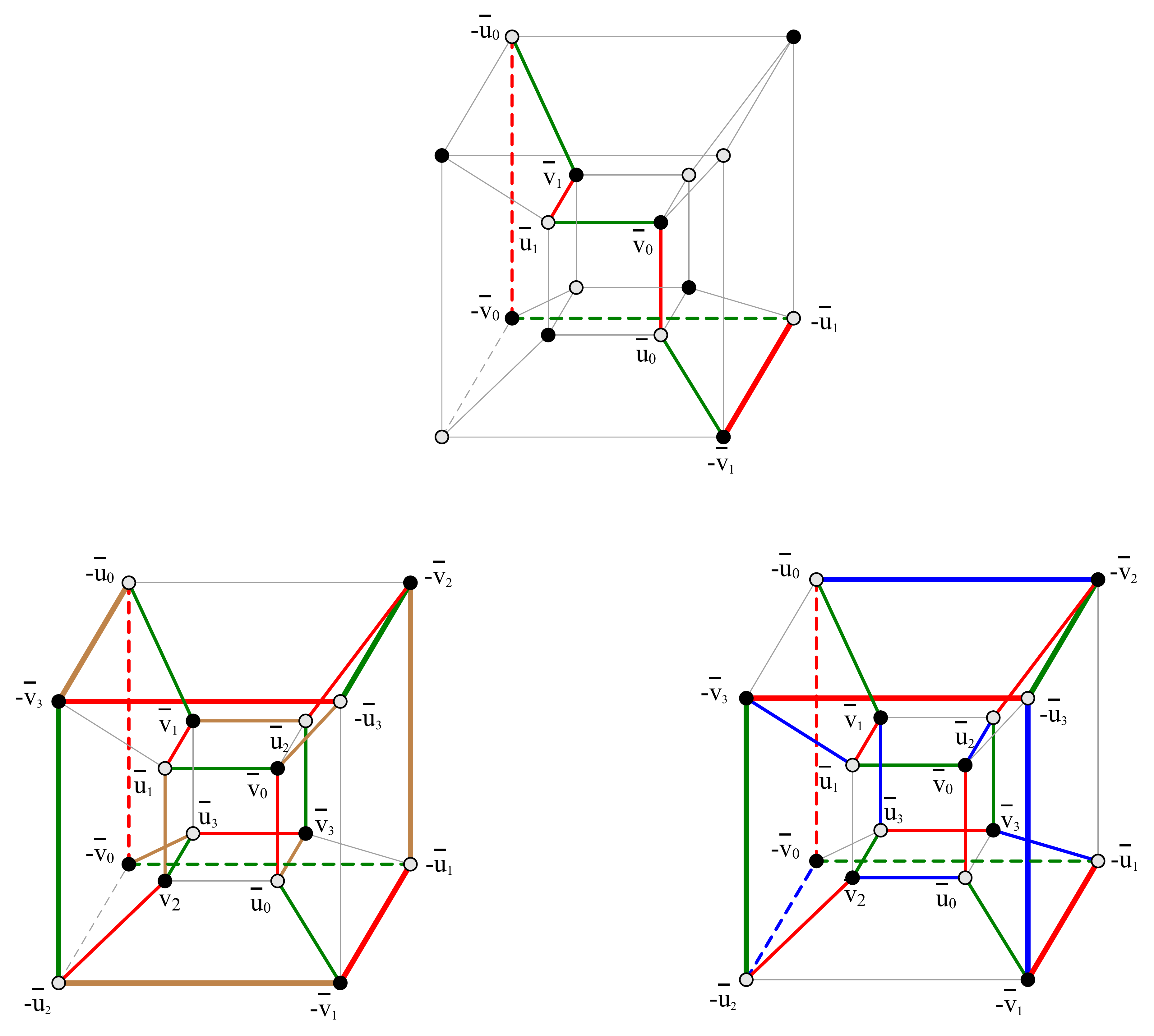}
\caption{A face of $\hat{\qo}$ and its two incident facets}
\label{2facetsinR4}
\end{center}
\end{figure}

Finally, the polytope $\hat{\qo}$ is chiral. This can be seen with arguments analogous to those in the previous section, using the symmetry group of the 4-cube $\{4,3,3\}$ instead of that of the hemi-cube. 
Furthermore, it is not difficult to see that the 2-faces of $\hat{\qo}$ are helices in $\mathbb{R}^4$ and that the stabiliser of the pair consisting of the 2-face and any of the 3-faces in Figure~\ref{2facetsinR4}, as permutation of the vertices of $\hat{\qo}$, is $$(\tilde{v_0}, \tilde{u_0}, -\tilde{v_1}, -\tilde{u_1}, -\tilde{v_0}, -\tilde{u_0}, \tilde{v_1}, \tilde{u_1})(\tilde{u_2}, \tilde{v_2},-\tilde{u_3},\tilde{v_3}, -\tilde{u_2}, -\tilde{v_2},\tilde{u_3},-\tilde{v_3}),$$
implying that the 1-step rotation in that 2-face is also the 3-step rotation in the other red-green 2-face.
Hence, the two components of the above isometry are a 1-step 8-fold rotation followed by a perpendicular 3-step 8-fold rotation.
We have established the following theorem.

\begin{theorem}
The polytope $\hat{\qo}$ is a chiral 4-polytope of full rank.
\end{theorem}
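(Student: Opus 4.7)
My approach is to separate the two claims. The full-rank condition is immediate from the construction: $\hat{\qo}$ is realised in $\mathbb{S}^3 \subset \mathbb{R}^4$, and its $16$ vertices (the vertices of the hypercube $\{4,3,3\}$) affinely span $\mathbb{R}^4$. What must be proved is therefore the chirality of $\hat{\qo}$, meaning that its geometric symmetry group has exactly two flag orbits and that adjacent flags belong to different orbits.

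The plan is to mirror, step by step, the proof of the theorem of Section~2, replacing the symmetry group of the hemi-hypercube by that of the hypercube $\{4,3,3\}$ (of order $384$). First I would identify the group of colour-respecting isometries of the $1$-skeleton $G$ equipped with the lifted chiral colouring, and argue that it has order $192$: every orientation preserving symmetry of the hypercube covers an orientation preserving symmetry of $\qo$ and hence preserves the chiral colouring, by the analysis of Section~2. Conversely, any orientation reversing isometry of $\mathbb{R}^4$ preserving $\hat{\qo}$ and its colouring would descend, via the antipodal quotient $\mathbb{S}^3 \to \mathbb{P}^3$, to an orientation reversing colour-respecting isometry of $\qo$, contradicting the geometric chirality already established for $\qo$.

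Next I would verify the Schulte--Weiss criterion from \cite[p.~495]{chiregas}. The permutation displayed immediately before the theorem already furnishes the $1$-step rotation along an $8$-gonal $2$-face: as indicated there, it is the product of two orthogonal $8$-fold screw motions of $\mathbb{R}^4$ and preserves both the colouring and an incident facet. Combined with the $3$-fold rotation around an edge of $G$ (lifted from the analogous rotation in $\qo$) and with the transitivity of the lifted orientation preserving group on incident vertex--facet pairs, this gives generators of the three relevant stabilisers required by the criterion. The flag-orbit count then follows exactly as in Section~2, and chirality of $\hat{\qo}$ is established.

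The hardest step, and the one requiring the most care, is the descent argument ruling out reflective symmetries of $\hat{\qo}$. One must verify that both orientation and colour behave well under the covering $\mathbb{S}^3 \to \mathbb{P}^3$: the antipodal map of $\mathbb{R}^4$ has determinant $+1$, so orientation descends from $\mathbb{R}^4$ to the orientable $3$-manifold $\mathbb{P}^3$, and the lifted colouring is by construction equivariant under the deck transformation, so colour preservation also descends. With these two facts in hand, the reduction to the geometric chirality of $\qo$ is immediate and completes the proof.
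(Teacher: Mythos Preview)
Your proposal is correct and follows the paper's outline: full rank is read off the construction, the Section~2 analysis is lifted from the hemi-hypercube group to the hypercube group of order $384$, and the Schulte--Weiss criterion is checked using the displayed $8$-cycle permutation together with the $3$-fold rotations about edges.

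The one place where you deviate from the paper is in ruling out orientation-reversing symmetries. The paper does this exactly as in Section~2, by a direct inspection (``Using Figure~\ref{TheOne} it is straightforward to see that $\hat{\qo}$ is not regular''): one simply checks that a hyperplane reflection of the hypercube fails to respect the lifted chiral colouring. You instead push a hypothetical reflective symmetry of $\hat{\qo}$ down through the double cover $\mathbb{S}^3\to\mathbb{P}^3$ and invoke the chirality of $\qo$ already proved. This descent argument is a legitimate and slightly more conceptual alternative; your observation that $-I$ has determinant $+1$ in $\mathbb{R}^4$ is precisely what makes orientation (and not just colour) pass to the quotient, so that an orientation-reversing symmetry upstairs would yield one downstairs. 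The paper's direct check is shorter to execute but relies on the figure; your reduction avoids repeating that inspection at the cost of the covering bookkeeping you flag in your last paragraph.
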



\end{document}